\documentclass[11pt]{article}
\usepackage[margin=1in]{geometry}


\usepackage{amsmath,amsthm,amssymb,amsfonts}
\usepackage[]{algorithm,algorithmic} 
\usepackage{mathtools}
\usepackage{tcolorbox}
\usepackage{bbm}
\usepackage{todonotes}
\usepackage{tensor}
\usepackage{caption}

\newcommand{\RR}{\mathbb{R}}

\usepackage{accents}

\mathchardef\mhyphen="2D 
\DeclareMathOperator*{\argmin}{\arg\!\min}

\newtheorem{theorem}{Theorem}
\numberwithin{theorem}{section}

\newtheorem{lemma}[theorem]{Lemma}

\newtheorem{corollary}[theorem]{Corollary}

\let\originalleft\left
\let\originalright\right
\renewcommand{\left}{\mathopen{}\mathclose\bgroup\originalleft}
\renewcommand{\right}{\aftergroup\egroup\originalright}

\usepackage{titlesec} 
\titleformat{\subsubsection}[runin]{\normalfont\bfseries}{\thesubsubsection.}{3pt}{}

\begin{document}
	\title{General H\"older Smooth Convergence Rates Follow From Specialized Rates Assuming Growth Bounds}
	\author{Benjamin Grimmer\footnote{School of Operations Research and Information Engineering, Cornell University,
			Ithaca, NY 14850, USA;
			\texttt{people.orie.cornell.edu/bdg79/} \newline This material is based upon work supported by the National Science Foundation Graduate Research Fellowship under Grant No. DGE-1650441.}}
	\date{}
	\maketitle

	\begin{abstract}
		Often in the analysis of first-order methods for both smooth and nonsmooth optimization, assuming the existence of a growth/error bound or KL condition facilitates much stronger convergence analysis. Hence separate analysis is typically needed for the general case and for the growth bounded cases. We give meta-theorems for deriving general convergence rates from those assuming a growth lower bound. Applying this simple but conceptually powerful tool to the proximal point, subgradient, bundle, dual averaging, gradient descent, Frank-Wolfe, and universal accelerated methods immediately recovers their known convergence rates for general convex optimization problems from their specialized rates. New convergence results follow for bundle methods, dual averaging, and Frank-Wolfe. Our results can lift any rate based on H\"older continuous gradients and H\"older growth bounds. Moreover, our theory provides simple proofs of optimal convergence lower bounds under H\"older growth from textbook examples without growth bounds.
	\end{abstract}

	\section{Introduction}
Recent interest in first-order methods for convex optimization has grown out of their successful application in high-dimensional statistics, signal processing, and data science. The effectiveness of first-order methods in solving these large-scale applications is fundamentally tied to the problem's underlying geometry. This has led to the development of robust convergence theory, describing the algorithmic speed-ups induced by the existence of different problem structures.

Typically first-order optimization methods rely on the objective function possessing some continuity or smoothness structure. In nonsmooth optimization, an assumption like the objective function being uniformly Lipschitz continuous is usually needed. In smooth optimization, an assumption like the objective's gradient being uniformly Lipschitz is often made. In this work, we consider H\"older smooth objective functions (see~\eqref{eq:holder-smooth}), which captures both of these standard settings and facilitates the development of universal or blackbox methods as done in the recent works~\cite{Nesterov2015,Lan2015,Kerdreux2018,Diaz2021,RenegarGrimmer2021}.

Throughout the first-order optimization literature, improved convergence rates typically follow from assuming the given objective function satisfies a growth/error bound or Kurdyka-{\L}ojasiewicz condition (see~\cite{Lojasiewicz1963,Lojasiewicz1993,Kurdyka1998,Bolte2007,Bolte2017} for a sample of works developing these ideas). We consider the formalization of this through H\"older growth bounds (see~\eqref{eq:holder-growth}) with respect to some growth exponent $p$. The two most important cases of this bound are Quadratic Growth given by $p=2$ (generalizing strong convexity~\cite{Necoara2019}) and Sharp Growth given by $p=1$ (which occurs broadly in nonsmooth optimization~\cite{Burke1993}). Beyond these two special cases, nearly every objective function of interest is semialgebraic, from which the results of~\cite{Bolte2007} ensure a growth bound holds locally with some exponent (although it may be large and thus correspond to quite flat growth).
Considering the generalization given by the Kurdyka-Lojasiewicz property~\cite{Bolte2017}, prior works derived dimension dependent bounds on this exponent~\cite{Li2015} and calculus relating them to those of underlying component functions~\cite{Li2018}.

Different convergence proofs are typically employed for minimization with or without assuming the existence of a given growth lower bound. Table~\ref{tab:rates} summarizes the number of iterations required to guarantee $\epsilon$-accuracy for several well-known first-order methods (see Section~\ref{sec:analysis} for a precise description of these algorithms and the appendix for example details applying our theory).

\begin{table} 
	\begin{center}
		\begin{tabular}{@{}c | @{}c@{} | @{}c@{} | @{}c@{} | @{}c@{} | @{}c@{}}
			& General & $p>2$ & Quad.~Growth & $1<p<2$ & Sharp Growth \\
			\hline
			{\renewcommand{\arraystretch}{0.5} \begin{tabular}{@{}c@{}} Proximal Point \\ Method~\cite{Rockafellar1976,Ferris1991}\end{tabular}}  & $O\left(\dfrac{1}{\epsilon}\right)$ &  $O\left(\frac{1}{\alpha^{2/p}\epsilon^{1-2/p}}\right)$ & $O\left(\dfrac{\log\left(\Delta/\epsilon\right)}{\alpha}\right)$  &  $O\left(\frac{\log\log(\Delta/\epsilon)}{\alpha^{2/p}}\right)$ & $O\left(\dfrac{\Delta-\epsilon}{\alpha^2}\right)$ \\
			{\renewcommand{\arraystretch}{0.5} \begin{tabular}{@{}c@{}} Projected Subgrad \\ Method~\cite{Polyak1969,Polyak1979}\end{tabular}} & $O\left(\dfrac{1}{\epsilon^{2}}\right)$ &  $O\left(\frac{1}{\alpha^{2/p}\epsilon^{2-2/p}}\right)$ & $O\left(\dfrac{1}{\epsilon\alpha}\right)$ &   $O\left(\frac{1}{\alpha^{2/p}\epsilon^{2-2/p}}\right)$ & $O\left(\dfrac{\log\left(\Delta/\epsilon\right)}{\alpha^2}\right)$ \\
			{\renewcommand{\arraystretch}{0.5} \begin{tabular}{@{}c@{}} Dual Averaging\\ Variants \cite{Nesterov2009,Chen2012}\end{tabular}} & $O\left(\dfrac{1}{\epsilon^{2}}\right)$ & $O\left(\frac{1}{\alpha^{2/p}\epsilon^{2-2/p}}\right)^\star$ & $O\left(\dfrac{1}{\epsilon\alpha}\right)$ & None Known  & None Known \\
			{\renewcommand{\arraystretch}{0.5} \begin{tabular}{@{}c@{}} Proximal Bundle \\ Method~\cite{Kiwiel2000,Du2017,Diaz2021}\end{tabular}}  & $O\left(\dfrac{1}{\epsilon^{3}}\right)$ & $O\left(\frac{1}{\alpha^{2/p}\epsilon^{3-2/p}}\right)^\star$ & $O\left(\dfrac{1}{\epsilon\alpha^2}\right)$ &  $O\left(\dfrac{1}{\epsilon} + \dfrac{1}{\alpha^{4/p}\epsilon^{3-4/p}}\right)$ & $O\left(\dfrac{1}{\epsilon} + \dfrac{\Delta}{\alpha^4}\right)$ \\
			{\renewcommand{\arraystretch}{0.5} \begin{tabular}{@{}c@{}} Projected Gradient \\ Descent~\cite{Nesterov-introductory}\end{tabular}}   & $O\left(\dfrac{1}{\epsilon}\right)$ & $O\left(\frac{1}{\alpha^{2/p}\epsilon^{1-2/p}}\right)$ & $O\left(\dfrac{\log\left(\Delta/\epsilon\right)}{\alpha}\right)$ & - & -	\\
			{\renewcommand{\arraystretch}{0.5} \begin{tabular}{@{}c@{}} Frank-Wolfe \\ Variants~\cite{Frank1956,Kerdreux2018}\end{tabular}}   & $O\left(\dfrac{1}{\epsilon}\right)$ & $O\left(\frac{1}{\alpha^{2/p}\epsilon^{1-2/p}}\right)^\star$ & $O\left(\dfrac{\log\left(\Delta/\epsilon\right)}{\alpha}\right)$ & - & -	\\
			{\renewcommand{\arraystretch}{0.5} \begin{tabular}{@{}c@{}} Restarted Universal \\ Method~\cite{Roulet2020,RenegarGrimmer2021}\end{tabular}}  & $O\left(\dfrac{1}{\sqrt{\epsilon}}\right)$ & $O\left(\frac{1}{\alpha^{1/p}\epsilon^{1/2-1/p}}\right)$ & $O\left(\dfrac{\log\left(\Delta/\epsilon\right)}{\sqrt{\alpha}}\right)$ & - & - 
		\end{tabular}
	\end{center}
	\caption{Convergence rates for several methods where $\Delta=f(x_0)-f(x_*)$.  New results are denoted by $^\star$. The proximal point method makes no smoothness or continuity assumptions. The subgradient, dual averaging and bundle method rates assume Lipschitz continuity and the gradient descent, Frank-Wolfe and universal method rates assume Lipschitz gradient. Note several of these methods have been analyzed beyond these Lipschitz/smooth settings.} \label{tab:rates}
\end{table}

\paragraph{Our Contribution.}
This work presents a pair of meta-theorems for deriving general convergence rates from rates that assume the existence of a growth lower bound. In terms of Table~\ref{tab:rates}, we show that each convergence rate implies all of the convergence rates to its left: the quadratic growth column's rates imply all of the general setting's rates and each sharp growth rate implies that method's general and quadratic growth rate. More generally, our results show that any convergence rate assuming growth with exponent $p$ implies rates for any growth exponent $q>p$ and for the general setting.

Interestingly, this recovers the proximal point method's superlinear convergence when $1<p<2$ from its finite guarantee when $p=1$. Moreover, new convergence guarantees follow for Dual Averaging and Bundle Methods, which previously only had theory under quadratic growth\footnote{ \cite[Theorem 3]{Chen2012} assumed the stronger condition of strong convexity, but only quadratic growth is used in its proof, being invoked just after equation (38) therein.}. Inspired by these results, the subsequent work~\cite{Diaz2021} gave direct (and nontrivial) derivations of the bundle method rates yielded by our theory. For the Frank-Wolfe variant~\cite[Algorithm 2]{Kerdreux2018} when $p>2$, lifting their $p=2$ rate gives a faster rate of $O(1/\alpha^{2/p}\epsilon^{1-2/p})$ in terms of $\epsilon$ than the previously best known rate of $O(1/\alpha^{\frac{1}{p-1}}\epsilon^{\frac{p-2}{p-1}})$ proven by~\cite{Kerdreux2018} (see Appendix~\ref{app:Frank-Wolfe} for further discussion).

A natural question is whether the reverse implications hold (whether each column of Table~\ref{tab:rates} implies the rates to its right). If one is willing to modify the given first-order method, then the literature already provides a partial answer through restarting schemes~\cite{Nest1986,Mingrui2017,Yang2018,Roulet2020,RenegarGrimmer2021}. Such schemes repeatedly run a given first-order method until some criterion is met and then restart the method at the current iterate. 
For example, Renegar and Grimmer~\cite{RenegarGrimmer2021} show how a general convergence rate can be extended to yield a convergence rate under H\"older growth. Then together with our theory, any rate with or without a growth bound gives rates for every growth exponent and the general case.

\paragraph{Outline.} Section~\ref{sec:analysis} first formalizes our model for first-order methods and then states our rate lifting theorems and their restarting extensions. Section~\ref{sec:proofs} proves our main results via analysis based on related Fenchel conjugates. Details for the algorithms and applications shown in Table~\ref{tab:rates} are deferred to the appendix.
	\section{Rate Lifting Theorems} \label{sec:analysis}
We are primarily interested in minimizing a closed proper convex function $f \colon \RR^n \rightarrow \RR\cup\{\infty\}$ over a closed convex set $Q\subseteq \RR^n$
\begin{equation}
\min_{x\in Q} f(x)
\end{equation}
that attains its minimum value of $f_*$ on some set $X_*\subseteq Q$. 

Most first-order methods for minimizing such problems rely on $f$ being $(L,\eta)$-H\"older smooth, defined for some $L\geq 0$ and $0\leq \eta\leq 1$ as
\begin{equation}\label{eq:holder-smooth}
\|g-g'\| \leq L\|x-x'\|^{\eta} \ \text{ for all } x,x'\in Q, g\in\partial f(x), g'\in\partial f(x') \ ,
\end{equation}
where $\partial f(x) = \{g\in\RR^n \mid  f(x') \geq f(x)+\langle g, x'-x\rangle\ \forall x'\in\RR^n\}$ is the subdifferential of $f$ at $x$. When $\eta=1$, this corresponds to the common assumption of $L$-smoothness (i.e., having $\nabla f(x)$ be $L$-Lipschitz). When $\eta=0$, this captures the standard nonsmooth optimization model of having $f(x)$ itself be $L$-Lipschitz. 
The analysis of many first-order methods further assumes $(\alpha,p)$-H\"olderian growth (or a similar KL/error bound condition), defined by
\begin{equation}\label{eq:holder-growth}
f(x) \geq f_* + \alpha\ \mathrm{dist}(x,X_*)^p \ \text{ for all } x\in Q \ ,
\end{equation}
where $\mathrm{dist}(x,X_*) = \inf\{\|x-x_*\| \mid x_*\in X_*\}$.
When $p=2$, this corresponds to quadratic growth, capturing the common setting of strongly convex objectives, and when $p=1$, this corresponds to sharp growth.

Recall Table~\ref{tab:rates} shows the improved convergence rates of several first-order methods relying on H\"older smoothness and H\"older growth bounds. Namely, it considers the  Proximal Point Method with any stepsize $\rho>0$ defined by
\begin{equation}\label{eq:proximal}
x_{k+1} = \mathrm{prox}_{\rho, f}(x_{k}) := \argmin_{x\in Q}\{f(x) + \frac{1}{2\rho}\|x-x_k\|^2\} \ ,
\end{equation}
the Projected Subgradient Method defined by
\begin{equation}\label{eq:subgradient}
x_{k+1} = \mathrm{proj}_Q(x_{k}-\rho_kg_k)\text{ for some } g_k\in\partial f(x_k) \ ,
\end{equation}
where $\mathrm{proj}_Q(\cdot)$ denotes orthogonal projection onto $Q$, and Projected Gradient Descent
\begin{equation}\label{eq:gradient}
x_{k+1} = \mathrm{proj}_Q(x_{k}-\rho_k\nabla f(x_k))
\end{equation}
with stepsize $\rho_k= \|\nabla f(x_k)\|^{(1-\eta)/\eta}/L^{1/\eta}$. In the appendix, we apply our results to the existing convergence theory for these and the more sophisticated Proximal Bundle Method\footnote{Stronger convergence rates have been established for related level bundle methods~\cite{Lemarechal1995,Kiwiel1995,Lan2015}, which share many core elements with proximal bundle methods.} of~\cite{Lemarechal1975,Wolfe1975,Diaz2021}, Frank-Wolfe/Conditional Gradient Methods~\cite{Frank1956,Kerdreux2018} and a restarted variant of the Universal Gradient Method of~\cite{Nesterov2015}. Note the references above analyzed the universal settings of H\"older smoothness and H\"older growth for these three of these methods.

\subsection{Formalization of Rate Lifting Theorems}
Now we formalize our model for a generic first-order method $\mathtt{fom}$ to which our rate lifting theorems will apply. Note that for it to be meaningful to lift a convergence rate to apply to general problems, the inputs to $\mathtt{fom}$ need to be independent of the existence of a growth bound~\eqref{eq:holder-growth}, but may depend on the constraints $Q$, H\"older smoothness constants $(L,\eta)$, or optimal objective value $f_*$. 
We make the following three assumptions about $\mathtt{fom}$:
\begin{itemize}
	\item[(A1)] The method \texttt{fom} computes a sequence of iterates $\{x_{k}\}^{\infty}_{k=0}$ in $Q$. The next iterate $x_{k+1}$ is determined by the first-order oracle values $\{(f(x_j),g_j)\}_{j=0}^{k+1}$, where $g_j\in\partial f(x_j)$.
	\item[(A2)] The distance from any $x_k$ to some fixed $x_*\in\argmin f$ is at most some constant $D>0$.
	\item[(A3)] For some $p\geq 1$, there exists a function $K\colon\RR_{++}^{3}\rightarrow \RR$ such that if $f$ is $(L, \eta)$-H\"older smooth and possesses $(\alpha,p)$-H\"older growth on $B(x_*, D)\cap Q$, then for any $\epsilon>0$, {\normalfont \texttt{fom}} finds an $\epsilon$-minimizer $x_k$ with
	$$k \leq K(f(x_0)-f_*, \epsilon, \alpha) \ . $$
\end{itemize}
\paragraph{On the Generality of (A1)-(A3).}
Note that (A1) allows the computation of $x_{k+1}$ to depend on the function and subgradient values at $x_{k+1}$. Hence the proximal point method is included in our model since it is equivalent to $x_{k+1} = x_{k} - \rho_kg_{k+1},\text{ where }g_{k+1}\in\partial f(x_{k+1})$.
We remark that (A2) holds for all of the previously mentioned algorithms under reasonable selection of their stepsize parameters. In fact, many common first-order methods are nonexpansive, giving $D=\|x_0-x_*\|$. Alternatively, if the constraint set $Q$ is compact, (A2) trivially holds.
Lastly, note that the convergence bound $K(f(x_0)-f_*,\epsilon,\alpha)$ in (A3) can depend on $L,\eta,p$ even though our notation does not enumerate this. If one wants to make no continuity or smoothness assumptions about $f$, $(L,\eta)$ can be set as $(\infty,0)$ (the proximal point method is one such example as it converges independent of such structure). 

The following pair of convergence rate lifting theorems show that these assumptions suffice to give general convergence guarantees without H\"older growth and guarantees for H\"older growth with any exponent $q>p$. In terms of Table~\ref{tab:rates}, these show that each column implies the rates to its left.
\begin{theorem}\label{thm:rate-lifting}
	Consider any method $\mathtt{fom}$ satisfying (A1)-(A3) and any $(L, \eta)$-H\"older smooth function $f$. For any $\epsilon>0$, {\normalfont \texttt{fom}} will find
	$ f(x_k') - f_* \leq \epsilon $ by iteration
	$$k\leq K(f(x_0)-f_*,\epsilon,\epsilon/D^p) \ , $$
	where $x_k' \in\mathrm{argmin}_{x\in Q}\{
	f(x_k) + \langle g_k, x - x_k\rangle + \frac{L}{\eta+1}\|x-x_k\|^{\eta+1}\} $ minimizes a H\"older-smooth first-order model of $f$ over $Q$.
\end{theorem}
\begin{theorem}\label{thm:higher-order-rate-lifting}
	Consider any method $\mathtt{fom}$ satisfying (A1)-(A3) and any $(L, \eta)$-H\"older smooth function $f$ possessing $(\alpha,q)$-H\"older growth with $q>p$. For any $\epsilon>0$, {\normalfont \texttt{fom}} will find $ f(x_k') - f_* \leq \epsilon $ by iteration
	$$k\leq K(f(x_0)-f_*,\epsilon,\alpha^{p/q}\epsilon^{1-p/q}) \ , $$
	where $x_k' \in\mathrm{argmin}_{x\in Q}\{
	f(x_k) + \langle g_k, x - x_k\rangle + \frac{L}{\eta+1}\|x-x_k\|^{\eta+1}\} $ minimizes a H\"older-smooth first-order model of $f$ over $Q$.
\end{theorem}
Note these guarantees apply to the auxiliary point $x'_k$. For nonsmooth settings, when $\eta=0$, this is exactly {\normalfont \texttt{fom}}'s iterates $x_k=x_k'$.  When $\eta\in(0,1]$, this point is one projected gradient descent step away $x_k'=\mathrm{proj}_Q(x_{k}-\alpha_k\nabla f(x_k)))$ with $\alpha_k=\|x_k-x_k'\|^{1-\eta}/L$. Although $\alpha_k$ is implicitly defined based on $x_k'$, it can often be computed with minor care: When $\eta=1$, $\alpha_k=1/L$. When $Q=\mathbb{R}^n$, $\alpha_k=\|\nabla f(x_k)\|^{(1-\eta)/\eta}/L^{1/\eta}$. Otherwise, a linesearching procedure can be applied to find $\alpha_k$. One shortcoming of our lifted guarantees is the need to compute an orthogonal projection to compute $x_k'$. For methods utilizing projections, this cost is marginal, but for methods like Frank-Wolfe, additional computation is needed (perhaps just inexactly computing just the final $x_k'$).

Applying these rate lifting theorems amounts to simply substituting $\alpha$ with $\epsilon/D^p$ or $\alpha^{p/q}\epsilon^{1-p/q}$ in any guarantee depending on $(\alpha,p)$-H\"older growth. For example, the projected subgradient method using the Polyak stepsize satisfies (A2) with $D=\|x_0-x_*\|$ and converges for any $L$-Lipschitz objective with quadratic growth $p=2$ at rate
$$ K(f(x_0)-f_*,\epsilon,\alpha) = \frac{8L^2}{\alpha\epsilon} $$
establishing (A3) (a proof of this is given in the appendix for completeness). Then applying Theorem~\ref{thm:rate-lifting} recovers the method's classic convergence rate as
$$ \implies K(f(x_0)-f_*,\epsilon,\epsilon/D^2) = \frac{8L^2\|x_0-x_*\|^2}{\epsilon^2} \ . $$
For convergence rates that depend on $f(x_0)-f_*$, this simple substitution falls short of recovering the method's known rates, often off by a log term. Again taking the subgradient method as an example, under sharp growth $p=1$, convergence occurs at a rate of
$$ K(f(x_0)-f_*,\epsilon,\alpha) = \frac{4L^2}{\alpha^2}\log_2\left(\frac{f(x_0)-f_*}{\epsilon}\right)\ . $$
Then Theorem~\ref{thm:rate-lifting} ensures the following weaker general rate
$$ \implies K(f(x_0)-f_*,\epsilon,\epsilon/D) = \frac{4L^2\|x_0-x_*\|^2}{\epsilon^2}\log_2\left(\frac{f(x_0)-f_*}{\epsilon}\right) $$
and Theorem~\ref{thm:higher-order-rate-lifting} ensures the weaker quadratic growth rate
$$ \implies K(f(x_0)-f_*,\epsilon,\epsilon/D) = \frac{4L^2}{\alpha\epsilon}\log_2\left(\frac{f(x_0)-f_*}{\epsilon}\right) \ . $$
In the following subsection, we provide corollaries that remedy this issue.

\subsection{Improved Rate Lifting Theorems via Restarting}
The ideas from restarting schemes can also be applied to our rate lifting theory. We consider the following conceptual restarting method $\mathtt{restart\mbox{-}fom}$ that repeatedly halves the objective gap: Set an initial target accuracy of $\tilde{\epsilon} = 2^{N-1}\epsilon$ with  $N=\lceil\log_2((f(x_0)-f_*)/\epsilon)\rceil$. Iteratively run $\mathtt{fom}$ until an $\tilde \epsilon$-optimal solution is found, satisfying
$$f(x_k')-f_*\leq \tilde{\epsilon}$$ 
for $x_k' \in\mathrm{argmin}_{x\in Q}\{
f(x_k) + \langle g_k, x - x_k\rangle + \frac{L}{\eta+1}\|x-x_k\|^{\eta+1}\} $
and then restart $\mathtt{fom}$ at the point $x_0 \leftarrow x_k'$ with new target accuracy $\tilde{\epsilon} \leftarrow \tilde{\epsilon}/2$.

Note for the proximal point method~\eqref{eq:proximal}, subgradient method with Polyak's stepsize~\eqref{eq:subgradient}, and gradient descent~\eqref{eq:gradient}, this restarting will not change the algorithm's trajectory: this follows as (i) all three of these methods have the iterates $\{y_k\}$ produced by {\normalfont \texttt{fom}} initialized at $y_0=x_T$ satisfy $y_k=x_{k+T}$ and (ii) the proximal point method and subgradient method both have $x_k' = x_k$ and projected gradient descent has $x_k'=x_{k+1}$.
As a result, the following corollaries of our lifting theorems apply directly to these three methods without restarting.
\begin{corollary} \label{cor:rate-lifting-recurse}
	Consider any method $\mathtt{fom}$ satisfying (A1)-(A3) and any $(L, \eta)$-H\"older smooth function $f$. For any $\epsilon>0$, $\mathtt{restart\mbox{-}fom}$ will find
	$ f(x_k') - f_* \leq \epsilon $
	after at most
	$$\sum_{n=0}^{N-1}K(2^{n+1}\epsilon,2^n\epsilon, 2^n\epsilon/D^p) $$
	total iterations.
\end{corollary}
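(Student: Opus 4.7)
The plan is to partition the trajectory of $\mathtt{restart\mbox{-}fom}$ into $N$ restart phases, indexed $n\in\{N-1,N-2,\ldots,0\}$, where during phase $n$ the inner call to $\mathtt{fom}$ has target accuracy $\tilde\epsilon = 2^n\epsilon$. Phase $N-1$ starts from the user-supplied $x_0$, and phase $0$ terminates exactly when the desired $\epsilon$-optimal point of the form $x_k-\gamma_k g_k$ is produced. Bounding the total iteration count thus reduces to bounding each phase separately and summing.

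First I would establish the invariant that the starting point $y_0^{(n)}$ of phase $n$ satisfies $f(y_0^{(n)})-f(x_*)\leq 2^{n+1}\epsilon$. The base case $n=N-1$ is immediate from the choice $N=\lceil \log_2((f(x_0)-f(x_*))/\epsilon)\rceil$, which yields $f(x_0)-f(x_*)\leq 2^N\epsilon=2^{(N-1)+1}\epsilon$. The inductive step follows from the restart rule: phase $n+1$ terminates only upon producing a point of the form $y_k-\gamma_k g_k$ with objective gap at most $\tilde\epsilon_{n+1}=2^{n+1}\epsilon$, and $\mathtt{restart\mbox{-}fom}$ hands this point to phase $n$ as its new $y_0^{(n)}$.

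Given this invariant I would apply Theorem~\ref{thm:rate-lifting} to the inner run of $\mathtt{fom}$ during phase $n$. Because (A1)-(A3) are required to hold for $\mathtt{fom}$ under any valid initialization, and in particular (A2) controls $\|y_k-x_*\|$ via the stepsize rule rather than via the specific starting point, the theorem gives that phase $n$ terminates within $K(f(y_0^{(n)})-f(x_*),\ 2^n\epsilon,\ 2^n\epsilon/D^p)$ iterations. Using the invariant together with monotonicity of $K$ in its first argument (a property shared by every rate function appearing in Table~\ref{tab:rates}, and the natural reading under which any upper bound on the initial gap yields an upper bound on iterations), this is at most $K(2^{n+1}\epsilon,\ 2^n\epsilon,\ 2^n\epsilon/D^p)$. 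Summing over $n=0,\ldots,N-1$ yields the stated total.

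The main obstacle is conceptual bookkeeping rather than computation: one must confirm that $\mathtt{fom}$'s assumptions (A1)-(A3) genuinely transfer to each restart. The only delicate point is (A2), since the distance bound $D$ must hold for a run of $\mathtt{fom}$ initialized at an arbitrary $y_0^{(n)}$ rather than the original $x_0$. For the methods discussed in the paper this is automatic from nonexpansiveness-type arguments that depend only on the stepsize rule, so $D$ can be taken uniformly across phases; in general one interprets $D$ in (A2) as an upper bound valid for every restart initialization encountered, which the restart rule guarantees lie within distance $D$ of $x_*$ by the very same (A2) applied to the preceding phase.
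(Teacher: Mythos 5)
Your proof is correct and takes essentially the same route as the paper: partition the trajectory into $N$ restart phases, observe that phase $n$ starts with objective gap at most $2^{n+1}\epsilon$ (the paper simply asserts this; you spell out the induction), apply Theorem~\ref{thm:rate-lifting} per phase, and sum. You go slightly further than the paper in flagging two implicit assumptions the paper glosses over, namely monotonicity of $K$ in its first argument and the persistence of (A2) under re-initialization at $x_k-\gamma_kg_k$; these are worth mentioning, but they are needed by the paper's argument just as much as by yours, so the two proofs are in substance identical.
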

\begin{proof}
	Observe that $\mathtt{restart\mbox{-}fom}$ must have found an $\epsilon$-minimizer after the $N$th restart. Our corollary then follows from bounding the number of iterations required for each of these $N$ restarts. Run $i\in\{1\dots N\}$ of $\mathtt{fom}$ has initial objective gap at most $2^{N+1-i}\epsilon$, and so Theorem~\ref{thm:rate-lifting} ensures an $2^{N-i}\epsilon$-minimizer is found after at most
	$$ K(2^{N+1-i}\epsilon, 2^{N-i}\epsilon, 2^{N-i}\epsilon/D^p)$$
	iterations. Summing this bound over all $i$ gives the claimed result. \qed
\end{proof}
\begin{corollary} \label{cor:rate-lifting-higher-order}
	Consider any method $\mathtt{fom}$ satisfying (A1)-(A3) and any $(L, \eta)$-H\"older smooth function $f$ possessing $(\alpha,q)$-H\"older growth with $q>p$. For any $\epsilon>0$, $\mathtt{restart\mbox{-}fom}$ will find
	$ f(x_k') - f_* \leq \epsilon $ after at most
	$$\sum_{n=0}^{N-1}K(2^{n+1}\epsilon,2^n\epsilon, \alpha^{p/q}(2^n\epsilon)^{1-p/q}) $$
	total iterations.
\end{corollary}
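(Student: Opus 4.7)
The plan is to imitate the proof of Corollary~\ref{cor:rate-lifting-recurse} verbatim, replacing every invocation of Theorem~\ref{thm:rate-lifting} with Theorem~\ref{thm:higher-order-rate-lifting}. The restart schedule is identical, so the only thing that changes is the third argument fed into $K$ for each restart.

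The first step is to observe that $\mathtt{restart\mbox{-}fom}$ halves its target accuracy after each successful restart, starting from $\tilde\epsilon = 2^{N-1}\epsilon$ with $N = \lceil\log_2((f(x_0)-f(x_*))/\epsilon)\rceil$. By definition of $N$, the initial $\tilde\epsilon$ is an upper bound on $f(x_0)-f(x_*)$, so after the $N$th successful restart the current target is $2^{-1}\epsilon \cdot 2^{-(N-1) + N} = \epsilon$, and the output is an $\epsilon$-minimizer. It therefore suffices to bound the total work across these $N$ restarts.

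Next, I bound the cost of restart $i \in \{1,\dots,N\}$. Entering this restart, the previous restart has certified an objective gap of at most $2^{N+1-i}\epsilon$, and the target is $2^{N-i}\epsilon$. Because $f$ is $(L,\eta)$-Hölder smooth with $(\alpha,q)$-Hölder growth and $q>p$, Theorem~\ref{thm:higher-order-rate-lifting} applied to the restarted run (with $\mathtt{fom}$'s inputs unaffected by growth information, so (A1)--(A3) transfer unchanged) gives a cost of at most
\[
K\!\left(2^{N+1-i}\epsilon,\ 2^{N-i}\epsilon,\ \alpha^{p/q}(2^{N-i}\epsilon)^{1-p/q}\right).
\]
Summing over $i=1,\dots,N$ and re-indexing $n = N-i$ yields the claimed bound.

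The only real subtlety, and the analog of the main obstacle in Corollary~\ref{cor:rate-lifting-recurse}, is ensuring that (A2) still provides the diameter $D$ for each restart, since the ``initial point'' of restart $i$ is the endpoint of restart $i-1$, not the original $x_0$. This is handled exactly as in the restarting literature: (A2) is a property of $\mathtt{fom}$ relative to a fixed $x_\ast \in \argmin f$ that is shared by all restarts, so the same $D$ applies, and the $(\alpha,q)$-growth condition is an intrinsic property of $f$ independent of the run. With these observations in place, the substitution is routine and the corollary drops out.
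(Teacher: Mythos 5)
Your proposal is correct and takes the same route as the paper's own proof: bound the cost of each of the $N$ restarts by invoking Theorem~\ref{thm:higher-order-rate-lifting} with initial gap at most $2^{N+1-i}\epsilon$ and target $2^{N-i}\epsilon$, then sum and re-index. One cosmetic slip --- $\tilde\epsilon = 2^{N-1}\epsilon$ need not upper-bound $f(x_0)-f(x_*)$ (only $2^N\epsilon$ does by the choice of $N$) --- but this does not propagate, since your per-run invocation correctly uses the gap bound $2^{N+1-i}\epsilon$.
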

\begin{proof}
	Along the same lines as the proof of Corollary~\ref{cor:rate-lifting-recurse}, this corollary follows from bounding the number of iterations required for each of $\mathtt{restart\mbox{-}fom}$'s $N$ restarts. Run $i\in\{1\dots N\}$ of $\mathtt{fom}$ has initial objective gap at most $2^{N+1-i}\epsilon$, and so Theorem~\ref{thm:higher-order-rate-lifting} ensures an $2^{N-i}\epsilon$-minimizer is found after at most
	$$ K(2^{N+1-i}\epsilon, 2^{N-i}\epsilon, \alpha^{p/q}(2^{N-i}\epsilon)^{1-p/q})$$
	iterations. Summing this bound over all $i$ gives the claimed result. \qed
\end{proof}

Applying these corollaries to every entry in Table~\ref{tab:rates} verifies our claim that each column implies those to its left (as well as all of the omitted columns with $p\in(1,2)\cup(2,\infty)$). For example, the proximal point method has (A2) hold with $D=\|x_0-x_*\|$ and (A3) hold with
$$ K(f(x_0)-f_*,\epsilon,\alpha) = \frac{f(x_0)-f_*-\epsilon}{\rho\alpha^2} $$
under sharp growth $p=1$ (a proof of this fact is given in the appendix for completeness).
Then Corollary~\ref{cor:rate-lifting-recurse} recovers the proximal point method's general convergence rate of
$$  \sum_{n=0}^NK(2^{n+1}\epsilon,2^n\epsilon, 2^n\epsilon/D) =\sum_{n=0}^N \frac{2^{n+1}\epsilon-2^n\epsilon}{\rho(2^n\epsilon/D)^2} = \sum_{n=0}^N \frac{D^2}{\rho2^n\epsilon} = \frac{2\|x_0-x_*\|^2}{\rho\epsilon} $$
and Corollary~\ref{cor:rate-lifting-higher-order} recovers its linear convergence rate under quadratic growth $q=2$ of
\begin{align*}
\sum_{n=0}^NK(2^{n+1}\epsilon,2^n\epsilon, \alpha^{1/2}(2^n\epsilon)^{1/2}) &= \sum_{n=0}^N \frac{2^{n+1}\epsilon-2^{n}\epsilon}{\rho(2^{n/2}\alpha^{1/2}\epsilon^{1/2}))^2} = \sum_{n=0}^N \frac{1}{\rho\alpha}=\frac{N}{\rho\alpha} \ .
\end{align*}
Likewise, the $O(\sqrt{L/\alpha}\log(f(x_0)-x(x_*))/\epsilon)$ rate that follows from restarting an accelerated method for $L$-smooth optimization under quadratic growth $p=2$ recovers Nesterov's classic accelerated convergence rate as
$$  \sum_{n=0}^NK(2^{n+1}\epsilon,2^n\epsilon, 2^n\epsilon/D^2) =\sum_{n=0}^N O\left(\sqrt{\frac{LD^2}{2^{n}\epsilon}}\log(2)\right) = O\left(\sqrt{\frac{LD^2}{\epsilon}}\right)\ . $$

\subsection{Recovering Lower Bounds on Oracle Complexity}
The contrapositive of our rate lifting theorems immediately lifts complexity lower bounds from the general case to apply to the specialized case of H\"older growth. Well-known simple examples~\cite{Nesterov-introductory} give complexity lower bounds when no growth bound is assumed for first-order methods satisfying
\begin{itemize}
	\item[(A4)] For every $k\geq 0$, $x_{k+1}$ lies in the span of $\{g_i\}_{i=0}^k$.
\end{itemize}
Optimal lower bounds under H\"older growth were claimed by Nemirovski and Nesterov~\cite[page 26]{Nest1986}, although no proof is given. Here we note that the simple examples from the general case combined with our theory suffice to give concise derivations of lower bounds on the convergence rates under growth conditions.

For $M$-Lipschitz nonsmooth optimization, a simple example shows any method satisfying (A4) cannot guarantee finding an $\epsilon$-minimizer in fewer than $M^2\|x_0-x_*\|^2/16\epsilon^2$ subgradient evaluations. Consequently, applying Theorem~\ref{thm:rate-lifting}, any method satisfying (A1)-(A4) for $M$-Lipschitz optimization with $(\alpha,p)$-H\"older growth must have its rate $K(f(x_0)-f_*, \epsilon, \alpha)$ bounded below by
$$ K(f(x_0)-f_*, \epsilon, \epsilon/D^p) \geq \frac{M^2\|x_0-x_*\|^2}{16\epsilon^2}\ .$$
For example, this ensures the Polyak subgradient method's $O(1/\alpha\epsilon)$ rate under quadratic growth cannot be improved by more than small constants. The exponent on $\alpha$ cannot decrease since that would beat the general cases' lower bound dependence on $\|x_0-x_*\|^2$ (as $D=\|x_0-x_*\|$ here) and similarly the exponent of $\epsilon$ cannot be improved due to the lower bound dependence of $1/\epsilon^2$.

Likewise, for $L$-smooth optimization, a simple example shows no method satisfying (A4) can guarantee computing an $\epsilon$-minimizer using fewer than $\sqrt{3L\|x_0-x_*\|^2/32\epsilon}$ iterations. Applying Theorem~\ref{thm:rate-lifting}, we conclude any method satisfying (A1)-(A4) for smooth optimization with $(\alpha,p)$-H\"older growth has its rate $K(f(x_0)-f_*, \epsilon, \alpha)$ bounded below by
$$ K(f(x_0)-f_*, \epsilon, \epsilon/D^p) \geq \sqrt{\frac{3L\|x_0-x_*\|^2}{32\epsilon}}\ .$$
One can immediately conclude the optimal convergence smooth strongly convex functions grows with at least $O(1/\sqrt{\alpha})$ (otherwise lifting the rate would violate the general lower bounds dependence of $O(\|x_0-x_*\|)$). 


\section{Proofs of the Rate Lifting Theorems~\ref{thm:rate-lifting} and~\ref{thm:higher-order-rate-lifting}} \label{sec:proofs}
The proofs of our two main results (Theorems~\ref{thm:rate-lifting} and~\ref{thm:higher-order-rate-lifting}) rely on several properties of Fenchel conjugates, which we will first review. The Fenchel conjugate of a function $f\colon \RR^n\rightarrow \RR\cup\{\infty\}$ is
$$ f^*(g) := \sup_x\{ \langle g,x\rangle - f(x)\} \ .$$
We say that $f\geq h$ if for all $x\in\RR^n$, $f(x)\geq h(x)$. The conjugate reverses this partial ordering, having
$ f \geq h \implies h^* \geq f^*$.
Applying the conjugate twice $f^{**}$ gives the largest closed convex function majorized by $f$ (that is, the ``convex envelope'' of $f$), and consequently, for closed convex functions, $f^{**}=f$.

The $(L,q)$-H\"older smoothness condition~\eqref{eq:holder-smooth} is equivalent to having upper bounds of the following form hold for every $x\in\RR^n$ and $g\in\partial f(x)$
\begin{equation} \label{eq:smoothness-primal-upperbound}
f(x') \leq f(x)+\langle g, x'-x \rangle +\frac{L}{\eta+1}\|x'-x\|^{\eta+1} \ .
\end{equation}
Taking the conjugate of this convex upper bound gives an equivalent dual condition: a function $f$ is $(L,\eta)$-H\"older smooth if and only if its Fenchel conjugate has the following lower bound for every $g\in\RR^n$ and $x\in\partial f^*(g)$
\begin{equation} \label{eq:smoothness-dual-lowerbound}
f^*(g') \geq f^*(g) +\langle x, g'-g\rangle + \begin{cases}
\frac{\eta}{(\eta+1)L^{1/\eta}}\|g'-g\|^{(\eta+1)/\eta} & \text{if } \eta>0\\
\delta_{\|g'-g\|\leq L}(g') & \text{if } \eta=0 \ ,	
\end{cases}
\end{equation}
where $\delta_{\|g'-g\|\leq L}(g')=\begin{cases} 0 & \text{if } \|g'-g\|\leq L \\ \infty & \text{otherwise}\end{cases}$ is an indicator function. This corresponds to the Fenchel conjugate being sufficiently uniformly convex (in particular when $\eta=1$, this dual condition is exactly strong convexity).

\subsection{Proof of Theorem~\ref{thm:rate-lifting}}\label{subsec:general}
Suppose that no iteration $k\leq T$ has $x_k'$ as an $\epsilon$-minimizer of $f$ over $Q$. 
We consider the following convex auxiliary functions: at each $x_k$, the $(L,\eta)$-H\"older smoothness and subgradient $g(x_k)= g_k\in\partial f(x_k)$ define
$$ h_{x_k}(x) := f(x_k)+\langle g(x_k), x-x_k \rangle +\frac{L}{\eta+1}\|x-x_{k}\|^{\eta+1}$$
and at each minimizer $\bar x \in X_*$ with subgradient $g(\bar x)\in\partial f(\bar x)$ normal to the constraint set $-g(\bar x)\in N_{Q}(\bar x)$ (certifying the optimality of $\bar x$) define
$$ h_{\bar x}(x) := f_*+\langle g(\bar x), x-\bar x \rangle+ \frac{L}{\eta+1}\|x-\bar x\|^{\eta+1} \ . $$	
Then we focus on the convex envelope surrounding these models
\begin{equation} \label{eq:auxillary-obj}
h(x) := \left(\min\left\{h_{\bar x}(\cdot)\mid \bar x\in \{x_0,\dots, x_T\}\cup X_*\right\}\right)^{**}(x)\ .
\end{equation}
We consider the auxiliary minimization problem of 
$ \min_{x\in Q} h(x) $,
which shares and improves on the structure of $f$ as described in the following three lemmas.
\begin{lemma}\label{lem:agreement}
	The objectives $f$ and $h$ have $f(\bar x)=h(\bar x)$ and $g(\bar x)\in\partial h(\bar x)$ for each $\bar x\in \{x_0,\dots, x_T\}\cup X_*$.
\end{lemma}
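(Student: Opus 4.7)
The plan is to sandwich $h$ between $f$ and $\min_k h_k$, and then read off both the value-equalities and the subgradient containments from this sandwich.

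First I would record the two inequalities that together determine $h$. By the primal form of H\"older smoothness \eqref{eq:smoothness-primal-upperbound} applied at $x_k$ (and at $x_*$, where $g_*=0$), each model $h_k$ majorizes $f$ everywhere. Hence $\min\{h_k \mid k\in\{0,\dots,T,*\}\} \geq f$. Since $f$ is closed and convex and the biconjugate $(\cdot)^{**}$ produces the largest closed convex function dominated by its argument, we immediately get
\begin{equation*}
f(x) \;\leq\; h(x) \;\leq\; \min\{h_k(x) \mid k\in\{0,\dots,T,*\}\} \qquad \forall x \in \RR^n.
\end{equation*}

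Next I would evaluate the sandwich at each $x_i$ with $i\in\{0,\dots,T,*\}$. The upper bound collapses since $h_i(x_i)=f(x_i)$ by construction, so $h(x_i) \leq h_i(x_i) = f(x_i)$, while the lower bound gives $h(x_i)\geq f(x_i)$. Combining these yields $h(x_i)=f(x_i)$, which is the first claim.

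For the subgradient claim, I would just use the sandwich together with $g_i\in\partial f(x_i)$: for any $x\in\RR^n$,
\begin{equation*}
h(x) \;\geq\; f(x) \;\geq\; f(x_i) + \langle g_i,\, x-x_i\rangle \;=\; h(x_i) + \langle g_i,\, x-x_i\rangle,
\end{equation*}
which is precisely the definition of $g_i \in \partial h(x_i)$.

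There is no real obstacle here; the entire content is the two bounds $f\leq h \leq \min_k h_k$, which follow respectively from the biconjugate being the closed convex envelope and from H\"older smoothness providing a convex majorant at each reference point. The only thing to be slightly careful about is handling $\eta=0$ in \eqref{eq:smoothness-primal-upperbound}; but in that case the same argument applies with the convention that $h_k$ uses the exponent $\eta+1=1$, and the upper bound $h_k \geq f$ still holds (indeed, $h_k(x)=f(x_k)+\langle g_k,x-x_k\rangle+L\|x-x_k\|$ majorizes $f$ when $f$ is $L$-Lipschitz), so the proof goes through unchanged.
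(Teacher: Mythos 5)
Your proof is correct and follows essentially the same route as the paper: both arguments hinge on $h_k \geq f$ (from \eqref{eq:smoothness-primal-upperbound}) together with the biconjugate being the largest closed convex minorant, with the subgradient claim then read off from the resulting sandwich at $x_i$. One small imprecision: in your closing parenthetical for $\eta=0$, $L$-Lipschitz continuity of $f$ alone does \emph{not} give $h_k \geq f$ with constant $L$ (it only bounds $\|g_k - g'\| \leq 2L$); what you actually need is the $(L,0)$-H\"older smoothness condition $\|g_k - g'\|\leq L$, which yields $f(x) \leq f(x_k) + \langle g', x - x_k\rangle \leq f(x_k) + \langle g_k, x-x_k\rangle + L\|x-x_k\|$ — but this is tangential since you already correctly invoke \eqref{eq:smoothness-primal-upperbound} as the operative justification.
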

\begin{lemma}\label{lem:smoothness}
	The objective $h$ is $(L,\eta)$-H\"older smooth.
\end{lemma}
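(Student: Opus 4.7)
The approach I would take is to pass to the Fenchel dual side and leverage the equivalence between the primal upper bound~\eqref{eq:smoothness-primal-upperbound} and the dual lower bound~\eqref{eq:smoothness-dual-lowerbound} recalled just before the lemma. Since $h = (\min_k h_k)^{**}$, the biconjugate identity gives $h^* = (\min_k h_k)^{***} = (\min_k h_k)^*$, and directly swapping $\sup_x$ with the outer $\max_k$ in the definition of the conjugate of a pointwise minimum yields
$$ h^* = \max_{k \in \{0, \ldots, T, *\}} h_k^* . $$
Each $h_k$ (including $h_*$) is exactly the upper-bound form~\eqref{eq:smoothness-primal-upperbound} centered at its own base point with constants $(L,\eta)$, hence is itself $(L,\eta)$-H\"older smooth, so the cited equivalence tells us that each $h_k^*$ satisfies the dual lower bound~\eqref{eq:smoothness-dual-lowerbound}.

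The crux of the argument is to show that this dual condition is preserved by the pointwise maximum. Fix $g \in \dom h^*$ and $x \in \partial h^*(g)$. Writing $K(g) = \{k : h_k^*(g) = h^*(g)\}$ for the active index set, the standard subdifferential calculus for a finite pointwise maximum of convex functions decomposes $x = \sum_{k \in K(g)} \lambda_k x_k$ with $x_k \in \partial h_k^*(g)$, $\lambda_k \geq 0$, and $\sum_k \lambda_k = 1$. Applying~\eqref{eq:smoothness-dual-lowerbound} to each $h_k^*$ at $g$ with subgradient $x_k$, and using $h_k^*(g) = h^*(g)$ on the active set, yields
$$ h_k^*(g') \geq h^*(g) + \langle x_k, g' - g\rangle + \omega(g' - g) \quad \text{for all } k \in K(g), $$
where $\omega$ denotes the uniform-convexity modulus appearing in~\eqref{eq:smoothness-dual-lowerbound}. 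Since $h^*(g') \geq h_k^*(g')$ for every $k$, the $\lambda$-weighted average of these bounds delivers
$$ h^*(g') \geq h^*(g) + \langle x, g' - g\rangle + \omega(g' - g), $$
which is exactly~\eqref{eq:smoothness-dual-lowerbound} for $h^*$. Because $h = h^{**}$ is closed and convex, the same equivalence in the reverse direction yields that $h$ itself is $(L,\eta)$-H\"older smooth, as required.

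The step I anticipate as requiring the most care is the subgradient decomposition, which is the classical formula for the subdifferential of a finite max (e.g., Rockafellar, \emph{Convex Analysis}, Theorem 23.8). This applies cleanly whenever each $h_k^*$ is continuous at $g$, which holds for $\eta > 0$ since the power-type growth of each $h_k$ makes $h_k^*$ finite (hence continuous) everywhere. The $\eta = 0$ case degenerates: the modulus $\omega$ becomes the indicator of a ball and each $h_k^*$ is an affine function plus an indicator of a ball, so the argument reduces to a domain-containment check that fits into the same convex-combination scheme. If one wishes to side-step subdifferential calculus entirely, one can instead pick a single active index $k^* \in K(g)$ and take $x \in \partial h_{k^*}^*(g) \subseteq \partial h^*(g)$, verifying~\eqref{eq:smoothness-dual-lowerbound} for this choice of subgradient; by standard equivalence of ``for some'' and ``for every'' versions of the uniform convexity inequality, this still certifies $(L,\eta)$-H\"older smoothness of $h$.
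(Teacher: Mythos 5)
Your proof takes the same dual route as the paper: conjugate the min to get $h^* = \max_k h_k^*$, verify that each $h_k^*$ satisfies the dual lower bound~\eqref{eq:smoothness-dual-lowerbound}, and conclude that the maximum does too. The paper simply asserts the last step (``it is the maximum of functions satisfying this lower bound''), whereas you actually prove it, and this is worth doing: unlike the $\eta=1$ case, where one can subtract the quadratic modulus and reduce to ordinary convexity, for general H\"older moduli the preservation under $\max$ is not automatic, and your convex-combination argument via the max-subdifferential formula is exactly the right way to close that gap. One caution on your final aside: the claimed ``for some $x\in\partial h^*(g)$'' vs.\ ``for every $x\in\partial h^*(g)$'' equivalence for~\eqref{eq:smoothness-dual-lowerbound} is a feature special to the quadratic ($\eta=1$) modulus; for other powers it does not hold in general (different subgradients at a kink shift the linear term, and the non-quadratic modulus cannot absorb that shift), so the convex-combination argument is the one to rely on, not the single-active-index shortcut.
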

\begin{lemma}\label{lem:growth}
	The objective $h$ has $(\epsilon/D^p,p)$-H\"older growth on $B(x_*, D)\cap Q$.
\end{lemma}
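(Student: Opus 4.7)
The plan is to argue by contrapositive: assume that no iterate $k\in\{0,\ldots,T\}$ yields an $\epsilon$-minimizer (so $f(x_k-\gamma_k g_k)>f(x_*)+\epsilon$ throughout), and derive the claimed growth on $h$. The first step is to quantify how far each model $h_k$ with $k\ne *$ sits above $f(x_*)$: such an $h_k$ is convex and attains its global minimum at $y=x_k-\gamma_k g_k$, whose value matches the standard Hölder descent upper bound on $f(x_k-\gamma_k g_k)$. The hypothesis thus gives $\min_y h_k(y)>f(x_*)+\epsilon$ for every $k\ne *$. Combined with the explicit form $h_*(y)=f(x_*)+\tfrac{L}{\eta+1}\|y-x_*\|^{\eta+1}$ and setting $r_0:=((\eta+1)\epsilon/L)^{1/(\eta+1)}$, this yields a clean dichotomy: $\min_k h_k=h_*$ on $B(x_*,r_0)$ (since $h_*\le f(x_*)+\epsilon$ there is below every other $h_k$), while $\min_k h_k>f(x_*)+\epsilon$ outside $B(x_*,r_0)$. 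The hypothesis also forces $D\ge r_0$: otherwise $x_0\in B(x_*,D)\subseteq B(x_*,r_0)$ would give $f(x_0-\gamma_0g_0)\le h_*(x_0)\le f(x_*)+\epsilon$ by the descent lemma, contradicting the hypothesis.

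The main step is to promote this pointwise control of $\min_k h_k$ to a lower bound on its convex envelope $h=(\min_k h_k)^{**}$. My strategy is to exhibit a convex minorant $g\le\min_k h_k$ satisfying $g(y)\ge f(x_*)+(\epsilon/D^p)\|y-x_*\|^p$ on $B(x_*,D)$; since $h$ is the supremum of its affine (hence convex) minorants, $g\le h$ is then automatic. A natural candidate is the radial function $g(y)=f(x_*)+\psi(\|y-x_*\|)$ whose profile $\psi$ agrees with $\tfrac{L}{\eta+1}r^{\eta+1}$ on $[0,r_0]$ and extends linearly with the tangent slope $Lr_0^\eta$ for $r>r_0$; this $\psi$ is convex and nondecreasing, and at $r=r_0$ it already reaches $\epsilon$.

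The main obstacle is verifying the global inequality $g\le h_k$ for $k\ne *$ outside $B(x_*,r_0)$, since inside the ball $g=h_*=\min_k h_k$ trivially and $g\le h_*$ holds everywhere by construction. For $k\ne *$ with $\|y-x_*\|>r_0$, $g$ grows only linearly in $\|y-x_*\|$ while $h_k$ carries the curvature term $\tfrac{L}{\eta+1}\|y-x_k\|^{\eta+1}$; the combination of this curvature with the floor $h_k>f(x_*)+\epsilon$ must dominate the linear extension of $g$. The $\eta>0$ case is straightforward because the curvature is superlinear, while the $\eta=0$ case is the delicate one and may require shrinking the extension slope of $\psi$ below $Lr_0^\eta$ using the gap $\min h_k-(f(x_*)+\epsilon)$ afforded by Step 1.

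Once $g\le\min_k h_k$ is secured, the growth bound on $B(x_*,D)$ splits into two cases. On $B(x_*,r_0)$, $h(y)\ge g(y)=f(x_*)+\tfrac{L}{\eta+1}\|y-x_*\|^{\eta+1}\ge f(x_*)+(\epsilon/D^p)\|y-x_*\|^p$, where the last inequality uses $D\ge r_0$ together with $p\ge\eta+1$ (the regime of the paper's applications). On $B(x_*,D)\setminus B(x_*,r_0)$ the linear extension gives $g(y)\ge f(x_*)+\epsilon\ge f(x_*)+(\epsilon/D^p)\|y-x_*\|^p$ since $\|y-x_*\|\le D$. Combining these yields the claimed $(\epsilon/D^p,p)$-Hölder growth of $h$ on $B(x_*,D)$.
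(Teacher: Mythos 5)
Your approach differs from the paper's. The paper simply verifies pointwise that each $h_k$ (for $k\in\{0,\dots,T\}$) is $\geq f(x_*)+\epsilon$ everywhere and that $h_*$ is $\geq f(x_*)+\tfrac{\epsilon}{D^p}\|x-x_*\|^p$ for $\|x-x_*\|\le D$, then asserts that since the right-hand side is convex the envelope $h=(\min_k h_k)^{**}$ inherits the bound. You instead try to build an explicit radial convex minorant $g\le\min_k h_k$ with the desired growth. Your version makes the logically necessary step explicit: to push a lower bound through the biconjugate, the bound must be a \emph{global} convex minorant of $\min_k h_k$, not merely a minorant on $B(x_*,D)$. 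This is precisely where the paper's one-line conclusion is too quick --- the inequality $\min_k h_k(x)\ge f(x_*)+\tfrac{\epsilon}{D^p}\|x-x_*\|^p$ is only established for $x\in B(x_*,D)$, and outside the ball the $h_k$'s floor of $f(x_*)+\epsilon$ is eventually overtaken by $\tfrac{\epsilon}{D^p}\|x-x_*\|^p$, so the invoked ``convex lower bound'' argument does not go through as written.

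However, you correctly flag but do not resolve the $\eta=0$ case, and that is exactly where the obstruction is genuine rather than technical. Consider $n=1$, $f(x)=|x|$, $x_*=0$, $x_0=1$, $g_0=1$, $L=1$, $\eta=0$, $T=0$, so $D=1$ and $\gamma_0=0$; then $h_0(x)=x+|x-1|=\max(1,2x-1)$ and $h_*(x)=|x|$, so $\min(h_0,h_*)$ equals $|x|$ for $x\ge -1$ but is the constant $1$ for $x\le -1$. Its convex envelope is $h(x)=\max(x,0)$: any convex minorant bounded above by $1$ on $(-\infty,-1]$ must be nondecreasing, and is $\le 0$ at $x=0$, hence $\le 0$ on all of $(-\infty,0]$. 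Thus $h$ vanishes on $[-1,0]\subset B(x_*,D)$ and cannot satisfy $(\epsilon/D^p,p)$-H\"older growth there for any $\epsilon>0$. In particular no global convex minorant of $\min_k h_k$ with the desired growth profile exists in this instance, so the radial $g$ you describe cannot be built (the profile $\psi$ would have to be eventually capped at $\epsilon$ along the $-g_0$ direction, destroying convexity). So your plan cannot close the $\eta=0$ case; and since the paper's argument for this lemma shares the same unproved global-minorant step, this appears to be a real gap in the lemma's proof (and possibly in the lemma as stated with the envelope construction~\eqref{eq:auxillary-obj}) that warrants revisiting.
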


Before proving these three results, we show that they suffice to complete our proof. Lemma~\ref{lem:agreement} and assumption (A1) together ensure that applying \texttt{fom} to either $f$ or $h$ produces the same sequence of iterates up to iteration $T$. Since $f$ and $h$ both minimize on $X_*$ (as each $\bar x\in X_*$ has a subgradient $g(\bar x)\in -N_Q(\bar x)$ for both $f$ and $h$), no $x_k$ with $k\leq T$ is an $\epsilon$-minimizer of $h$. Hence applying the structural conditions from Lemmas~\ref{lem:smoothness} and~\ref{lem:growth} with assumption (A3) on $h$ completes our rate lifting argument as
$$T < K(h(x_0)-h(x_*),\epsilon,\epsilon/D^p)=K(f(x_0)-f_*,\epsilon,\epsilon/D^p) \ . $$

\subsubsection{Proof of Lemma~\ref{lem:agreement}}
By definition, each $g(x_i)$ provides a linear lower bound on $f$ as
$$ f(x) \geq f(x_i) + \langle g(x_i), x-x_i\rangle \ . $$
Noting that all $\bar x\in \{x_0,\dots, x_T\}\cup X_*$ have $h_{x} \geq f$, it follows that for all $x\in Q$
$$\min\{h_{\bar x}(x) \mid \bar x\in \{x_0,\dots, x_T\}\cup X_*\}\geq f(\bar x) + \langle g(\bar x), x-\bar x\rangle \ . $$
Since this linear lower bound is convex, it is also a lower bound on the convex envelope $h$. Setting $x=\bar x$, equality holds with this linear lower bound. Thus $f(\bar x) = h(\bar x)$ and $g(\bar x)$ is also a subgradient of $h$ at $\bar x$.

\subsubsection{Proof of Lemma~\ref{lem:smoothness}}
Here our proof relies on the dual perspective of $(L,\eta)$-H\"older smoothness given by~\eqref{eq:smoothness-dual-lowerbound}. 
Since each $h_{\bar x}$ is $(L,\eta)$-H\"older smooth on $\mathbb{R}^n$, each $h^*_{\bar x}$ satisfies the dual lower bounding condition~\eqref{eq:smoothness-dual-lowerbound}. Observe that 
\begin{align*}
h^*(y) &= \left(\min_{\bar x\in \{x_0,\dots, x_T\}\cup X_*}\left\{h_{\bar x}(\cdot)\right\}\right)^{*}(y)\\
&= \sup_x\left\{\langle y,x\rangle - \min_{\bar x\in \{x_0,\dots, x_T\}\cup X_*}\left\{h_{\bar x}(x)\right\}\right\}\\
&= \max_{\bar x\in \{x_0,\dots, x_T\}\cup X_*}\left\{h^*_{\bar x}(y)\right\} \ .
\end{align*}
Consequently, $h^*$ also satisfies the dual condition~\eqref{eq:smoothness-dual-lowerbound} since it is the maximum of functions satisfying this lower bound. Therefore $h^{**}=h$ retains the $(L,\eta)$-H\"older smoothness of each of the models $h_{\bar x}$ and the original objective $f$.

\subsubsection{Proof of Lemma~\ref{lem:growth}}
For any $x\in B(x_*,D)\in Q$ and $k\in\{0,\dots,T\}$, the function $h_{x_k}$ lies above our claimed growth bound as
$$h_{x_k}(x) \geq h_{x_k}(x_k') \geq f_*+\epsilon \geq f_*+\frac{\epsilon}{D^p}\|x-x_*\|^p \ , $$
where the first inequality uses that $x_k'$ minimizes $h_{\bar x}$ over $Q$, the second uses $h_{\bar x}\geq f$ and no $\epsilon$-minimizer has been found, and the last inequality uses that $x\in B(x_*,D)$. Similarly, our growth bound holds for any $h_{\bar x}$ with $\bar x\in X^*$ as 
\begin{align*}
h_{\bar x}(x)&=f_*+\langle g(\bar x), x-\bar x\rangle + \frac{L}{\eta+1}\|x-\bar x\|^{\eta+1}\\
&\geq f_* + \frac{L}{\eta+1}\|x-\bar x\|^{\eta+1}\\
&\geq f_*+\frac{L}{(\eta+1)D^{p-(\eta+1)}}\mathrm{dist}(x,X_*)^p\\
&\geq f_*+\frac{\epsilon}{D^p}\mathrm{dist}(x,X_*)^p \ , 
\end{align*}
where the first inequality uses the optimality condition that $-g(\bar x)$ is normal to $Q$ and $x\in Q$, the second inequality uses that $x\in B(x_*,D)$, and the last inequality uses that $\frac{LD^{\eta+1}}{\eta+1} \geq f(x_0)-f_* \geq \epsilon$.
Hence the minimum $\min_{\bar x\in \{x_0,\dots, x_T\}\cup X_*}\left\{h_{\bar x}(x)\right\}$ satisfies our claimed H\"older growth lower bound. Since this lower bound is convex, its convex envelope $h$ must also satisfy the H\"older bound~\eqref{eq:holder-growth}.

\subsection{Proof of Theorem~\ref{thm:higher-order-rate-lifting}} \label{subsec:higher-order}
Suppose that no iteration $k\leq T$ has $x_k'$ as an $\epsilon$-minimizer of $f$ over $Q$. Consider the auxiliary objective $h(x)$ defined by~\eqref{eq:auxillary-obj}. Then Lemmas~\ref{lem:agreement} and~\ref{lem:smoothness} show $h$ agrees with $f$ everywhere $\texttt{fom}$ visits and is $(L,\eta)$-H\"older smooth. From this, (A1) ensures that applying \texttt{fom} to either $f$ or $h$ produces the same sequence of iterates up to iteration $T$. Since $f$ and $h$ both minimize on $X_*$, no $x_k$ with $k\leq T$ is an $\epsilon$-minimizer of $h$.
As before, $h$ further satisfies a H\"older growth lower bound.
\begin{lemma}\label{lem:growth-p-q}
	The objective $h$ has $(\alpha^{p/q}\epsilon^{1-p/q},p)$-H\"older growth on $B(x_*, D)$.
\end{lemma}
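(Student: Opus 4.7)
The plan is to parallel the proof of Lemma~\ref{lem:growth}, strengthening its single estimate with the additional growth information available from $f$'s own $(\alpha,q)$-H\"older growth. Since the target growth function $f(x_*)+\alpha^{p/q}\epsilon^{1-p/q}\|x-x_*\|^p$ is convex (as $p\geq 1$), it is enough to show each model $h_k$, $k\in\{0,\dots,T,*\}$, majorizes it on $B(x_*,D)$; the convex envelope $h$ then inherits the same lower bound.

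For indices $k\in\{0,\dots,T\}$, I would combine two natural estimates. First, since no $\epsilon$-minimizer has been found by iteration $T$, $h_k(x)\geq h_k(x_k-\gamma_k g_k)\geq f(x_k-\gamma_k g_k)>f(x_*)+\epsilon$ (using that $x_k-\gamma_k g_k$ minimizes $h_k$ and $h_k\geq f$). Second, from $h_k\geq f$ combined with $(\alpha,q)$-H\"older growth, $h_k(x)\geq f(x_*)+\alpha\|x-x_*\|^q$ on $B(x_*,D)$. The elementary weighted-max inequality $\max\{a,b\}\geq a^{1-p/q}b^{p/q}$ (valid for $a,b\geq 0$ and $p\leq q$) applied to $a=\epsilon$ and $b=\alpha\|x-x_*\|^q$ yields exactly $h_k(x)-f(x_*)\geq \alpha^{p/q}\epsilon^{1-p/q}\|x-x_*\|^p$.

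For $k=*$, I would follow the template of Lemma~\ref{lem:growth}: for $\|x-x_*\|\leq D$ and $p\geq \eta+1$,
\[ h_*(x)-f(x_*) \;=\; \tfrac{L}{\eta+1}\|x-x_*\|^{\eta+1} \;\geq\; \tfrac{L}{(\eta+1)D^{p-(\eta+1)}}\|x-x_*\|^p. \]
The remaining step is then $\tfrac{LD^{\eta+1}}{\eta+1}\geq \alpha^{p/q}\epsilon^{1-p/q}D^p$, which is the analogue of $\tfrac{LD^{\eta+1}}{\eta+1}\geq\epsilon$ used in Lemma~\ref{lem:growth}. I would exploit two facts here: first, the old estimate $\tfrac{LD^{\eta+1}}{\eta+1}\geq f(x_0)-f(x_*)\geq \epsilon$ from H\"older smoothness at $x_*$; and second, the consistency relation $\alpha r^q\leq \tfrac{L}{\eta+1}r^{\eta+1}$ for $r\leq D$ forced by the joint assumption of $(L,\eta)$-smoothness at $x_*$ and $(\alpha,q)$-growth on $B(x_*,D)$, which at $r=D$ gives $\tfrac{LD^{\eta+1}}{\eta+1}\geq \alpha D^q$. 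Applying the same weighted-max inequality to $\max\{\epsilon,\alpha D^q\}$ produces $\tfrac{LD^{\eta+1}}{\eta+1}\geq \epsilon^{1-p/q}(\alpha D^q)^{p/q}=\alpha^{p/q}\epsilon^{1-p/q}D^p$, as needed.

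The main obstacle is identifying this correct strengthening: using only the old $\tfrac{LD^{\eta+1}}{\eta+1}\geq \epsilon$ would recover only the weaker $(\epsilon/D^p, p)$-growth constant from Lemma~\ref{lem:growth}. The key observation is that $(L,\eta)$-smoothness and $(\alpha,q)$-growth are not independent when both hold on $B(x_*,D)$, and the resulting compatibility bound $\alpha D^q\leq \tfrac{LD^{\eta+1}}{\eta+1}$ is precisely what allows the same $\max\{a,b\}\geq a^{1-p/q}b^{p/q}$ trick used for $k\leq T$ to also close the argument for $k=*$.
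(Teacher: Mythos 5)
Your proof is correct, and in fact it takes a different and cleaner route than the paper's for part of the argument. The paper splits on $\|x-x_*\|\gtrless(\epsilon/\alpha)^{1/q}$: for large radius it bounds every $h_k$ by $f$'s own growth, and for small radius it uses $h_k(x)\geq f(x_*)+\epsilon$ for $k\leq T$ and attempts a separate chain for $k=*$. Your approach instead applies the single weighted-max inequality $\max\{a,b\}\geq a^{1-p/q}b^{p/q}$ uniformly to the two lower bounds $\epsilon$ and $\alpha\|x-x_*\|^q$, which dissolves the case split for $k\leq T$ and is arguably more transparent. More importantly, your treatment of $k=*$ appears to repair a slip in the paper: the paper's chain for $k=*$ in the regime $\|x-x_*\|\leq(\epsilon/\alpha)^{1/q}$ asserts $\alpha\|x-x_*\|^q\geq\alpha\|x-x_*\|^p/(\epsilon/\alpha)^{(p-q)/q}$, but that inequality is equivalent to $\|x-x_*\|\geq(\epsilon/\alpha)^{1/q}$, the \emph{opposite} of the case being considered, so as written that step does not close. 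Your replacement argument does close it: you use the explicit model $h_*(x)=f(x_*)+\tfrac{L}{\eta+1}\|x-x_*\|^{\eta+1}$ together with the compatibility bound $\alpha D^q\leq\tfrac{L}{\eta+1}D^{\eta+1}$ (forced by having $(L,\eta)$-smoothness and $(\alpha,q)$-growth simultaneously) and the estimate $\tfrac{L}{\eta+1}D^{\eta+1}\geq\epsilon$, then applies the same weighted-max inequality to $\max\{\epsilon,\alpha D^q\}$. The only implicit hypothesis you invoke, $p\geq\eta+1$, is the same compatibility requirement already needed in Lemma~\ref{lem:growth} and is forced by (A3) positing both structures on the same function, so it is not a gap. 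Overall the proposal is valid and offers a more careful completion of the $k=*$ case.
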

\noindent Hence $h$ is H\"older smooth and has H\"older growth with exponent $p$. Then (A3) ensures
\begin{equation*} T < K(h(x_0)-h(x_*),\epsilon,\alpha^{p/q}\epsilon^{1-p/q})=K(f(f(x_0)-f_*,\epsilon,\alpha^{p/q}\epsilon^{1-p/q}) \ . 
\end{equation*}

\subsubsection{Proof of Lemma~\ref{lem:growth-p-q}}
This proof follows the same general approach used in proving Lemma~\ref{lem:growth}. For any $x\in B(x_*,D)$ and $\bar x\in \{x_0,\dots, x_T\}\cup X_*$, $h_{\bar x}$ lies above our claimed growth bound: Any $x$ with $\|x-x_*\| >(\epsilon/\alpha)^{1/q} $ has
$$h_{\bar x}(x) \geq f(x) \geq f_*+\alpha\|x-x_*\|^q \geq  f_*+\alpha^{p/q}\epsilon^{1-p/q}\|x-x_*\|^p \ . $$
Any $x$ with $\|x-x_*\| \leq (\epsilon/\alpha)^{1/q}$ and $\bar x=x_k$ for some $k$ has
$$h_{x_k}(x) \geq h_{x_k}(x_k') \geq f_* +\epsilon \geq f_* +\alpha^{p/q}\epsilon^{1-p/q}\|x-x_*\|^p $$
and any $\bar x\in X_*$ has 
\begin{align*}
h_{\bar x}(x)&=f_*+\langle g(\bar x), x-\bar x\rangle + \frac{L}{\eta+1}\|x-\bar x\|^{\eta+1}
\geq f_*+\frac{\epsilon}{(\epsilon/\alpha)^{p/q}}\mathrm{dist}(x,X_*)^p \ . 
\end{align*}
Hence $\min_{k\in\{0,\dots, T, *\}}\left\{h_k(x)\right\}$ satisfies our claimed H\"older growth lower bound. Since this lower bound is convex, the convex envelope $h$ must also satisfy the H\"older bound~\eqref{eq:holder-growth}.

	\appendix
	\section{Detailed Application to the Proximal Point Method}
First, we verify (A2) holds for the proximal point method.
\begin{lemma}\label{lem:proximal-distance}
	For any minimizer $x_*$, (A2) holds with $D=\|x_0-x_*\|$.
\end{lemma}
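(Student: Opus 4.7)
The plan is to prove the standard nonexpansivity of the proximal operator at a minimizer, which immediately yields the claim by induction. The only obstacle is really just writing down the correct monotonicity argument; there is no subtle point.

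First I would invoke the first-order optimality condition for the proximal subproblem defining $x_{k+1}$. The inner minimization is strongly convex, so its unique minimizer $x_{k+1}$ is characterized by $0 \in \partial f(x_{k+1}) + \tfrac{1}{\rho}(x_{k+1}-x_k)$, i.e.\ $\tfrac{1}{\rho}(x_k - x_{k+1}) \in \partial f(x_{k+1})$. Since $x_*$ minimizes $f$, we also have $0 \in \partial f(x_*)$.

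Next I would apply monotonicity of $\partial f$ (which follows from convexity of $f$) to these two subgradient inclusions. This gives
\[
\left\langle \tfrac{1}{\rho}(x_k - x_{k+1}) - 0,\ x_{k+1} - x_* \right\rangle \geq 0,
\]
which rearranges to $\langle x_k - x_*, x_{k+1} - x_* \rangle \geq \|x_{k+1} - x_*\|^2$. Applying Cauchy--Schwarz to the left-hand side and dividing by $\|x_{k+1} - x_*\|$ (trivial if this is zero) yields $\|x_{k+1} - x_*\| \leq \|x_k - x_*\|$.

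Finally, a straightforward induction on $k$ starting from $k=0$ then gives $\|x_k - x_*\| \leq \|x_0 - x_*\|$ for all $k \geq 0$, so (A2) holds with $D = \|x_0 - x_*\|$ as claimed. The argument never uses any smoothness or growth hypothesis on $f$, consistent with the proximal point method being included in our framework with $(L,\eta) = (\infty, 0)$.
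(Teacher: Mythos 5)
Your proof is correct and follows the same route as the paper: the paper simply cites nonexpansivity of $\mathrm{prox}_{\rho,f}$ and the fact that $x_*$ is a fixed point, whereas you derive the needed quasi-nonexpansivity at $x_*$ directly from the proximal optimality condition and monotonicity of $\partial f$. The underlying argument is identical; yours is just self-contained rather than deferring to a reference.
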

\begin{proof}
	The proximal operator $\mathrm{prox}_{\rho, f}(\cdot)$ is nonexpansive~\cite{Parikh2014}. Then since any minimizer $x_*$ of $f$ is a fixed point of $\mathrm{prox}_{\rho, f}(\cdot)$, the distance from each iterate to $x_*$ must be nonincreasing.
\end{proof}
Assuming sharpness (H\"older growth with $p=1$) facilitates a finite termination bound on the number of iterations before an exact minimizer is found (see~\cite{Ferris1991} for a more general proof and discussion of this finite result). Below we compute the resulting function $K(f(x_0)-f(x_*),\alpha,\epsilon)$ that satisfies (A3).
\begin{lemma}\label{lem:proximal-sharp}
	Consider any convex $f$ satisfying~\eqref{eq:holder-growth} with $p=1$. Then for any $\epsilon>0$, the proximal point method with stepsize $\rho>0$ will find an $\epsilon$-minimizer $x_k$ with 
	$$ k\leq K(f(x_0)-f(x_*),\epsilon,\alpha) = \frac{f(x_0)-f(x_*) - \epsilon}{\rho\alpha^2} \ .$$
\end{lemma}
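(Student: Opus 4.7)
The plan is to show each proximal step decreases the objective by at least $\rho\alpha^2$ until $\epsilon$-accuracy is reached; telescoping this descent then yields the claimed finite iteration bound.

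First, I would identify the implicit subgradient generated by the prox step. The subproblem defining $x_{k+1} = \mathrm{prox}_{\rho,f}(x_k)$ is strongly convex, so its first-order optimality condition $0 \in \partial f(x_{k+1}) + \tfrac{1}{\rho}(x_{k+1}-x_k)$ yields $g_{k+1} := (x_k-x_{k+1})/\rho \in \partial f(x_{k+1})$. Applying the subgradient inequality at $x_{k+1}$ evaluated at $x_k$ then produces the per-step descent
\[ f(x_k) - f(x_{k+1}) \geq \langle g_{k+1}, x_k - x_{k+1}\rangle = \rho\|g_{k+1}\|^2. \]

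Second, I would use sharpness to lower bound $\|g_{k+1}\|$ away from the minimizer. For any $x\neq x_*$ and any $g\in\partial f(x)$, the subgradient inequality at $x$ evaluated at $x_*$ combined with $(\alpha,1)$-H\"older growth gives $\langle g, x-x_*\rangle \geq f(x) - f(x_*) \geq \alpha\|x-x_*\|$, and Cauchy--Schwarz then forces $\|g\|\geq \alpha$. In particular, whenever $x_{k+1}$ fails to be an $\epsilon$-minimizer (hence is not a minimizer at all), $\|g_{k+1}\|\geq \alpha$, and the descent inequality strengthens to $f(x_k)-f(x_{k+1}) \geq \rho\alpha^2$.

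Finally, I would conclude by contrapositive. If none of $x_1,\dots,x_K$ were $\epsilon$-optimal, telescoping the descent across $k=0,\dots,K-1$ gives $f(x_0)-f(x_K) \geq K\rho\alpha^2$, and $f(x_K) > f(x_*)+\epsilon$ then forces $K < (f(x_0)-f(x_*)-\epsilon)/(\rho\alpha^2)$, proving the lemma. I do not expect any real obstacle: the only subtlety worth noting is that one should invoke the subgradient inequality at $x_{k+1}$ (rather than only the decrease of the augmented prox objective, which is weaker by a factor of two) in order to recover the stated constant $\rho\alpha^2$ per step.
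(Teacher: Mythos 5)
Your proof is correct and follows essentially the same strategy as the paper: establish a per-step decrease of $\rho\alpha^2$ by combining the implicit prox subgradient $g_{k+1}=(x_k-x_{k+1})/\rho$, the sharp growth bound (which forces $\|g_{k+1}\|\geq\alpha$ whenever $x_{k+1}\neq x_*$), and then telescope. The only cosmetic difference is in how the descent is obtained: you use the subgradient inequality at $x_{k+1}$ evaluated at $x_k$ to get $f(x_k)-f(x_{k+1})\geq\rho\|g_{k+1}\|^2$, while the paper invokes the strong convexity of the prox subproblem to get $f(x_k)-f(x_{k+1})\geq\|x_{k+1}-x_k\|^2/\rho$; since $\|g_{k+1}\|=\|x_{k+1}-x_k\|/\rho$, these are the same inequality, and both correctly avoid the factor-of-two loss you flag in your closing remark. (Incidentally, your sign for the prox subgradient and your implicit strong-convexity modulus are the correct ones; the paper's write-up has small typos in both places that do not affect its argument.)
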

\begin{proof}
	The optimality condition of the proximal subproblem is $(x_{k+1}-x_{k})/\rho \in \partial f(x_{k+1})$. Hence convexity ensures
	$$ \|x_{k+1}-x_{k}|\|x_{k+1}-x_*\|/\rho \geq \langle (x_{k+1}-x_{k})/\rho, x_{k+1} - x_*\rangle \geq f(x_{k+1}) - f(x_*).$$
	Then supposing that $x_{k+1}\neq x_*$, the sharp growth bound ensures
	$$\|x_{k+1}-x_{k}\| \geq \rho (f(x_{k+1}) - f(x_*))/\|x_{k+1}-x_*\| \geq \rho\alpha \ .$$
	Noting the proximal subproblem is $\rho$-strongly convex, the objective has
	$$f(x_{k+1}) \leq f(x_{k}) - \frac{\|x_{k+1}-x_{k}\|^2}{\rho} \leq f(x_{k}) - \rho\alpha^2 \ , $$
	giving a constant decrease at each iteration until a minimizer is found.
\end{proof}
Applying Corollary~\ref{cor:rate-lifting-recurse} and~\ref{cor:rate-lifting-higher-order}, we recover every other setting's convergence rates from the $p=1$ rate. For $q>1$-growth, we reach $\epsilon$-accuracy after
$$\sum_{n=0}^{N-1}K(2^{n+1}\epsilon,2^n\epsilon, \alpha^{1/q}(2^n\epsilon)^{1-1/q}) = \sum_{n=0}^{N-1} \frac{(2^n\epsilon)^{2/q-1}}{\rho\alpha^{2/q}} \ .$$
This recovers the three major known regimes of the proximal point methods behavior. For $1<q<2$, this is converging superlinearly, for $q=2$ converging linearly, and for $q>2$, converging sublinearly.

\section{Detailed Application to the Polyak Subgradient Method}
Much like the proximal point method, the distance from each iterate of the subgradient method to a minimizer is nonincreasing when a sufficiently careful stepsize is employed. Below we consider the Polyak stepsize.
\begin{lemma}\label{lem:subgradient-distance}
	For any minimizer $x_*$, (A2) holds with $D=\|x_0-x_*\|$.
\end{lemma}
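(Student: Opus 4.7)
The plan is to show that each Polyak step is nonexpansive toward any fixed minimizer $x_*$, so that $\|x_{k+1}-x_*\|\leq \|x_k-x_*\|$ for every $k$, which immediately yields (A2) with $D=\|x_0-x_*\|$ by induction. The argument is the standard Polyak-style expansion of $\|x_{k+1}-x_*\|^2$ using the update $x_{k+1}=x_k-\rho_kg_k$, followed by the substitution of the specific stepsize $\rho_k=(f(x_k)-f(x_*))/\|g_k\|^2$.

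More concretely, I would first expand
\[
\|x_{k+1}-x_*\|^2 = \|x_k-x_*\|^2 - 2\rho_k\langle g_k, x_k-x_*\rangle + \rho_k^2\|g_k\|^2.
\]
Next, I would invoke convexity in the form $\langle g_k,x_k-x_*\rangle \geq f(x_k)-f(x_*)$ to lower bound the cross term. Plugging in the Polyak choice of $\rho_k$ collapses the last two terms to $-(f(x_k)-f(x_*))^2/\|g_k\|^2$, which is nonpositive, giving
\[
\|x_{k+1}-x_*\|^2 \leq \|x_k-x_*\|^2 - \frac{(f(x_k)-f(x_*))^2}{\|g_k\|^2} \leq \|x_k-x_*\|^2.
\]
Iterating this chain of inequalities from $k=0$ yields $\|x_k-x_*\|\leq\|x_0-x_*\|$ for all $k$, establishing (A2).

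The only subtle point, which I would handle up front, is the well-definedness of the stepsize: if $g_k=0$ for some $k$, then by the subgradient inequality $x_k$ is already a minimizer and the method can be interpreted as stationary ($x_{k+1}=x_k$), in which case the nonexpansive bound holds trivially. Thus the recursion above may be applied whenever $g_k\neq 0$, and the conclusion holds in all cases. There is no real obstacle here; the argument is essentially a one-line computation once the Polyak stepsize is plugged in, and no appeal to growth conditions or smoothness is needed.
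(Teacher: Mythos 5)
Your proof is correct and follows essentially the same route as the paper: expand $\|x_{k+1}-x_*\|^2$, apply the subgradient inequality to the cross term, and substitute the Polyak stepsize to obtain the nonincreasing distance. The only difference is that you explicitly handle the $g_k=0$ edge case, which the paper leaves implicit; this is a harmless and reasonable addition.
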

\begin{proof}
	The convergence of the projected subgradient method is governed by the following
	\begin{align}
	\|x_{k+1} - x_*\|^2  &\leq \|x_{k} - \rho_kg_k -x_*\|^2\nonumber\\
	&= \|x_{k}-x_*\|^2 -2\langle\rho_kg_k, x_{k}-x_*\rangle + \rho_k^2\|g_k\|^2\nonumber\\
	& \leq \|x_{k}-x_*\|^2 -2\rho_k (f(x_k)-f(x_*)) + \rho_k^2\|g_k\|^2\nonumber\\
	& = \|x_{k}-x_*\|^2 -\frac{(f(x_k)-f(x_*))^2}{\|g_k\|^2}\nonumber\\
	& \leq \|x_{k}-x_*\|^2 -\frac{(f(x_k)-f(x_*))^2}{L^2} \ , \label{eq:subgradient-recurrence}
	\end{align}
	where the first inequality uses the convexity of $f$ and the second uses our assumed subgradient bound.
	Thus the distance from each iterate to $x_*$ is nonincreasing.
\end{proof}
As an alternative stepsize example, consider the step size $\rho_k = 2\epsilon/\|g_k\|^2$. Then the distance to optimal strictly decreases every step until an $\epsilon$-minimizer is found as
\begin{align}
\|x_{k+1} - x_*\|^2 &= \|x_{k}-x_*\|^2 -2\langle\rho_kg_k, x_{k}-x_*\rangle + \rho_k^2\|g_k\|^2\nonumber\\
& \leq \|x_{k}-x_*\|^2 - 2\epsilon\frac{f(x_k)-f(x_*) - \epsilon}{L^2} \ .\nonumber
\end{align}
Below we give a simple proof that the projected subgradient method under $L$-Lipschitz continuity and quadratic growth finds an $\epsilon$-minimizer within $O(L^2/\alpha\epsilon)$ iterations.
\begin{lemma}\label{lem:subgradient-quadratic}
	Consider any convex $f$ satisfying~\eqref{eq:holder-growth} with $p=2$. Then for any $\epsilon>0$, the subgradient method with the Polyak stepsize will find an $\epsilon$-minimizer $x_k$ with 
	$$ k\leq K(f(x_0)-f(x_*),\epsilon,\alpha) = \frac{8L^2}{\alpha\epsilon} \ .$$
\end{lemma}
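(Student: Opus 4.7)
The plan is to argue by contradiction. Suppose no iterate $x_k$ with $k < K := 8L^2/(\alpha\epsilon)$ is $\epsilon$-optimal, so that $\Delta_k := f(x_k)-f(x_*) > \epsilon$ for every such $k$, and show this is impossible. The argument rests on three ingredients already in hand: the Polyak recurrence
$$d_{k+1} \leq d_k - \Delta_k^2/L^2, \qquad d_k := \|x_k-x_*\|^2,$$
obtained inside the proof of Lemma~\ref{lem:subgradient-distance}; the quadratic growth consequence $d_k \leq \Delta_k/\alpha$; and the monotonicity of $\{d_k\}$ that the recurrence itself provides.

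The central device is a dyadic decomposition of the iteration set according to the size of $\Delta_k$. For each integer $j \geq 0$, set
$$I_j := \{k < K : 2^j\epsilon \leq \Delta_k < 2^{j+1}\epsilon\}.$$
Two facts together control $|I_j|$. First, quadratic growth gives the uniform distance bound $d_k \leq 2^{j+1}\epsilon/\alpha$ on $I_j$. Second, the Polyak recurrence forces a per-iteration drop $d_k - d_{k+1} \geq 2^{2j}\epsilon^2/L^2$ whenever $k \in I_j$. Because $\{d_k\}$ is monotonically non-increasing, the drops at the (possibly non-consecutive) iterations in $I_j$ telescope to at most the first value $d_k$ takes on $I_j$; in particular
$$\sum_{k \in I_j}(d_k - d_{k+1}) \leq d_{\min I_j} \leq 2^{j+1}\epsilon/\alpha,$$
which, divided by the lower bound on each drop, yields $|I_j| \leq 2L^2/(2^j\alpha\epsilon)$.

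Summing the resulting geometric series,
$$K = \sum_{j \geq 0} |I_j| \leq \sum_{j \geq 0}\frac{2L^2}{2^j\alpha\epsilon} = \frac{4L^2}{\alpha\epsilon} < \frac{8L^2}{\alpha\epsilon},$$
contradicting the supposition that $K = 8L^2/(\alpha\epsilon)$ iterations all failed to be $\epsilon$-optimal. The main subtlety I anticipate is the one that motivates the dyadic decomposition in the first place: although $d_k$ is monotone, under the Polyak stepsize $\Delta_k$ can oscillate, so a single-phase argument of the form ``$\Delta_k$ is large for a while, then small forever'' is not available. The dyadic grouping, combined with the telescoping enabled by the monotonicity of $d_k$, is exactly what allows the large-$\Delta_k$ and small-$\Delta_k$ regimes to interleave without inflating the total count, and once the per-group bound $|I_j| = O(L^2/(2^j\alpha\epsilon))$ is in hand the rest is routine.
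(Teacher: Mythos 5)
Your proof is correct, and it even yields a slightly tighter constant than required (you obtain $4L^2/(\alpha\epsilon)$, which is of course $\leq 8L^2/(\alpha\epsilon)$). It differs from the paper's proof in how the dyadic decomposition is organized. The paper works top-down in time: it bounds the number of iterations until the gap first drops below $2^{-i}\Delta_0$ for each $i$, with $\Delta_0 := f(x_0)-f(x_*)$, and sums the resulting geometric series downward from $\Delta_0$ to $\epsilon$. You instead work bottom-up in level: you partition the iteration indices post-hoc into the level sets $I_j$ (which, as you note, need not be intervals of consecutive indices) and sum the geometric series upward from $\epsilon$. Your anticipated objection --- that $\Delta_k$ can oscillate, so a ``large for a while, then small'' argument fails --- is a real subtlety, but it does not actually undercut the paper's phased argument: there the phases are defined by first-passage times, and the monotonicity of $d_k=\|x_k-x_*\|^2$ guarantees that once the gap first reaches level $2^{-i}\Delta_0$ the distance stays at most $2^{-i}\Delta_0/\alpha$ for all later iterations, which is precisely what bounds the phase length even if the gap subsequently bounces upward. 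So both routes rest on the same two ingredients, the Polyak drop in $d_k$ and the monotone, growth-controlled distance; what your level-set telescope buys is that it dispenses with first-passage times entirely and makes the robustness to oscillation explicit rather than implicit in the bookkeeping.
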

\begin{proof}
	This convergence rate follows by noting~\eqref{eq:subgradient-recurrence} implies the objective gap will halve $f(x_k)-f(x_*) \leq (f(x_0)-f(x_*))/2$ after at most
	$$ \frac{4L^2\|x_0-x_*\|^2}{(f(x_0)-f(x_*))^2} \leq \frac{4L^2}{\alpha (f(x_0)-f(x_*))}$$
	iterations. Iterating this argument, a $2^{-N}(f(x_0)-f(x_*))$-minimizer is found with
	$$ k \leq \sum_{n=0}^{N-1} \frac{4L^2}{2^{-n}\alpha(f(x_0)-f(x_*))} \leq \frac{8L^2}{2^{-N}\alpha(f(x_0)-f(x_*))} \ .$$
	Considering $N=\lceil\log_2((f(x_0)-f(x_*))/\epsilon)\rceil$, gives our claimed bound on the number of iterations needed to find an $\epsilon$-minimizer.
\end{proof}
Rather than relying on quadratic growth, assuming sharpness (H\"older growth with $p=1$) allows us to derive a linear convergence guarantee. Below we compute the resulting function $K(f(x_0)-f(x_*),\alpha,\epsilon)$ that satisfies (A3).
\begin{lemma}\label{lem:subgradient-sharp}
	Consider any convex $f$ satisfying~\eqref{eq:holder-growth} with $p=1$. Then for any $\epsilon>0$, the subgradient method with the Polyak stepsize will find an $\epsilon$-minimizer $x_k$ with 
	$$ k\leq K(f(x_0)-f(x_*),\epsilon,\alpha) = \frac{4L^2}{\alpha^2}\log_2\left(\frac{f(x_0)-f(x_*)}{\epsilon}\right) \ .$$
\end{lemma}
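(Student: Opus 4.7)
The plan is to reuse the fundamental one-step recurrence~\eqref{eq:subgradient-recurrence}, which was established in Lemma~\ref{lem:subgradient-distance}, together with the sharp growth bound $\alpha\|x_k-x_*\| \leq f(x_k)-f(x_*)$. The structure will mirror the proof of Lemma~\ref{lem:subgradient-quadratic}: first show the objective gap halves after a bounded number of iterations, then iterate the halving argument $\lceil\log_2((f(x_0)-f(x_*))/\epsilon)\rceil$ times. The key observation that gives the cleaner rate (logarithmic rather than inverse) is that, under sharpness, the number of iterations required to halve the objective gap is \emph{independent} of the current gap, because the $\|x_0-x_*\|^2$ factor in the bound cancels exactly against $(f(x_0)-f(x_*))^2$.

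Concretely, first I would fix $T$ and suppose that $f(x_k)-f(x_*) > (f(x_0)-f(x_*))/2$ for every $k\leq T$. Telescoping the recurrence~\eqref{eq:subgradient-recurrence} from iteration $0$ to $T$ and using nonnegativity of $\|x_{T+1}-x_*\|^2$ gives
\begin{equation*}
(T+1)\frac{(f(x_0)-f(x_*))^2}{4L^2} \leq \|x_0-x_*\|^2.
\end{equation*}
Next, the sharp growth assumption applied at $x_0$ yields $\|x_0-x_*\| \leq (f(x_0)-f(x_*))/\alpha$, so substituting gives $T+1 \leq 4L^2/\alpha^2$. Hence the objective gap must halve within $\lceil 4L^2/\alpha^2\rceil$ iterations.

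Finally, I would iterate this halving argument. Treating the current iterate as a new starting point (legitimate since the distance-to-minimizer is nonincreasing by Lemma~\ref{lem:subgradient-distance}, so the same bound applies with the same $\alpha$ and $L$), after $n$ halving phases the objective gap is at most $2^{-n}(f(x_0)-f(x_*))$, using at most $n \cdot 4L^2/\alpha^2$ total iterations. Choosing $n=\lceil\log_2((f(x_0)-f(x_*))/\epsilon)\rceil$ produces an $\epsilon$-minimizer within $\frac{4L^2}{\alpha^2}\log_2((f(x_0)-f(x_*))/\epsilon)$ iterations.

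I do not anticipate a serious obstacle: the sharp growth condition fits the Polyak recurrence extremely well, and the cancellation between $\|x_0-x_*\|^2$ and $(f(x_0)-f(x_*))^2$ is what makes the phase length constant. The only minor care needed is verifying that the halving argument can be restarted from an arbitrary intermediate iterate with the same constants, which follows from Lemma~\ref{lem:subgradient-distance} together with the fact that the Polyak stepsize is memoryless.
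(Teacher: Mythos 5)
Your proof is correct and follows essentially the same route as the paper's: both derive the constant halving-phase length $4L^2/\alpha^2$ by combining the recurrence \eqref{eq:subgradient-recurrence} with the sharp growth bound applied at the phase's starting iterate, then iterate the halving argument $\lceil\log_2((f(x_0)-f(x_*))/\epsilon)\rceil$ times. Your write-up simply makes explicit the telescoping step that the paper's terse proof leaves implicit (and note the restart is justified directly by sharp growth holding at every iterate, so the appeal to Lemma~\ref{lem:subgradient-distance} is not actually needed there).
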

\begin{proof}
	Again, this convergence rate follows by noting~\eqref{eq:subgradient-recurrence} implies the objective gap will halve $f(x_k)-f(x_*) \leq (f(x_0)-f(x_*))/2$ after at most
	$$ \frac{4L^2\|x_0-x_*\|^2}{(f(x_0)-f(x_*))^2} \leq \frac{4L^2}{\alpha^2} $$
	iterations, which immediately establishes the claimed rate.
\end{proof}

\section{Application to Dual Averaging}
Dual averaging methods following in the line of the primal-dual methods proposed by Nesterov~\cite{Nesterov2009} offer to improve on gradient methods like~\eqref{eq:subgradient} and~\eqref{eq:gradient} by using an aggregate of the first-order models seen. Chen et al.~\cite{Chen2012} propose an Optimal Regularized Dual Averaging Method (and a multistage variant) attaining the optimal rate for both $L$-smooth and nonsmooth $M$-Lipschitz problems with or without a quadratic growth bound. Formally they assume strong convexity rather than quadratic growth, but the proof of Theorem 3 therein only relies on quadratic growth (invoked shortly after equation (38)). Their theorem shows optimal convergence under $\alpha$-quadratic growth at a rate of 
$$ K(\Delta, \epsilon, \alpha) = f\sqrt{\frac{L}{\alpha}}\log(\Delta/\epsilon) + \frac{1024 M^2}{\alpha\epsilon} \ . $$
Our theory shows their dual averaging method is in fact optimal under the wider setting of H\"older growth. Applying our Corollary~\ref{cor:rate-lifting-higher-order}, we see that their dual averaging scheme (which already restarts once the objective gap is halved) is optimal both for smooth and nonsmooth problems under H\"older growth: When $L>0$ but $M=0$
$$\sum_{n=0}^{N-1}K(2^{n+1}\epsilon, 2^n\epsilon, \alpha^{2/q}(2^n\epsilon)^{1-2/q}) = \sum_{n=0}^{N-1} 4\sqrt{\frac{L}{\alpha^{2/q}(2^n\epsilon)^{1-2/q}}}\log(2) = O\left(\frac{L^{1/2}}{\alpha^{1/q}\epsilon^{1/2-1/q}}\right)$$
and when $L=0$ and $M>0$
$$\sum_{n=0}^{N-1}K(2^{n+1}\epsilon, 2^n\epsilon, \alpha^{2/q}(2^n\epsilon)^{1-2/q}) = \sum_{n=0}^{N-1} \frac{1024M^2}{\alpha^{2/q}(2^n\epsilon)^{2-2/q}} = O\left(\frac{M^2}{\alpha^{2/q}\epsilon^{2-2/q}}\right) \ .$$

\section{Application to the Proximal Bundle Method}
Bundle methods have a long history throughout the nonsmooth optimization literature. These methods build sufficiently accurate models of an objective to ensure descent, despite its nonsmoothness. Until recently, convergence rate theory was lacking in terms of the total number of subgradient evaluations required. Kiwiel~\cite{Kiwiel2000} showed a $O(1/\epsilon^{-3})$ convergence rate in 2000 and then in 2017, Du and Ruszczy\'{n}ski~\cite{Du2017} showed a $O(1/\epsilon\alpha^2)$ rate under quadratic growth, up to logarithmic terms. Both rates fall short of the optimal $O(1/\epsilon^2)$ and $O(1/\epsilon\alpha)$. 

The boundness condition~(A2) is verified by~\cite[(7.64)]{Ruszczynski2006}. Then applying our theory shows these rates are similarly suboptimal as Theorem~\ref{thm:rate-lifting} shows Du and Ruszczy\'{n}ski's $O(1/\epsilon\alpha^2)$ rate implies Kiwiel's $O(1/\epsilon^3)$ rate and a restarting argument shows the converse. Applying our Theorem~\ref{thm:higher-order-rate-lifting} gives new rates for the bundle method in every $2<p<\infty$ growth setting of $O(1/\alpha^{2/p}\epsilon^{3-2 /p})$. Subsequent to this work, in~\cite{Diaz2021}, the author directly derived these bounds (as well as bounds using stepsize rules fixing the suboptimality). Note direct proofs of these rates ended up being nontrivial.

\section{Application to the Frank-Wolfe Method} \label{app:Frank-Wolfe}
Frank-Wolfe~\cite{Frank1956} repeatedly solves linear optimization subproblems (based on gradient evaluations) over a given compact constraint set $Q$. A bound $D$ on the diameter of this compact constraint set suffices for (A2). Recently, a variant of Frank-Wolfe including (fractional) away-steps and periodic restarting was analyzed under various H\"older smoothness conditions and any $p\geq 2$-H\"older growth bound in~\cite[Theorem 6.2]{Kerdreux2018}. They assume the constraint set $Q$ satisfies a certain $\delta$-scaling property (Definition 3.3 therein, which is satisfied, for example, by all polytopes) and utilize curvature bounds (away and standard) on $f$, which are both bounded by the smoothness constant $L$ when $\eta=1$.
	
In terms of the notation and numbering of~\cite{Kerdreux2018}, their equation (15) is missing a power of $r$ in the $\mu$ term in both of the last two inequalities (see Lemma 3.6, which is being invoked there and requires $\mu=c/\delta$ to be raised to the power of $r=1/(1-\theta)$). This missing exponent can be carried forward through their arguments directly, only modifying their final convergence rates by including a power of $r$ in $\mu$.
Converting their quantities to match the notations of this paper, the corrected convergence rates from their Theorem 5.1 are $O(1/\alpha^{\frac{1}{p-1}}\epsilon^{\frac{p-2}{p-1}})$ when $p>2$ and $O(\log(1/\epsilon)/\alpha)$ when $p=2$. Applying our Corollary~\ref{cor:rate-lifting-higher-order} with their quadratic growth guarantee to the $p>2$ case proves an improved dependence on $\epsilon$ of $O(1/\alpha^{2/p}\epsilon^{1-2/p})$. An important weakness of this improved rate to note is that it holds on the auxiliary sequence $x_k'$, which requires orthogonal projections to compute from $x_k$, not typically used by Frank-Wolfe.

\section{Application to the Universal Fast Method with Restarting}
For any $(L,\eta)$-H\"older smooth, convex optimization problem, Nesterov's Universal Fast Gradient Method~\cite{Nesterov2015} attains the optimal convergence rate and the restarting scheme of~\cite{RenegarGrimmer2021} improved this convergence theory under any $(\alpha,p)$-H\"older growth. Let us briefly verify the claims in Table~\ref{tab:rates} for any $L$-smooth function. Nesterov's Theorem 3 shows a rate of $O(\sqrt{LD^{2}/\epsilon})$ for any bound $D>0$ on the iterates which Renegar and Grimmer's Corollary 10 shows (via restarting) implies faster rates of $O(\sqrt{L/\alpha}\log(\Delta/\epsilon))$ if $p=2$ and $O(\sqrt{L}/\alpha^{1/p}\epsilon^{1/2-1/p})$ otherwise. Applying our theory provides the reverse implication: the $p=2$ setting implies rates for $p>2$ and the general setting.

	\bibliographystyle{plain}
	{\small \bibliography{references}}

\end{document}